\numberwithin{equation}{section}
\newtheorem{theorem}{Theorem}[section]
\newtheorem{lemma}[theorem]{Lemma}
\newtheorem*{aim-non}{Aim}
\newtheorem*{conjecture-non}{Conjecture}
\theoremstyle{definition}
\newtheorem{definition}{Definition}
\begin{document}
	\title[Semiprojectivity of principal $G$-bundles with $\lambda$-connections]
	{Semiprojectivity of the moduli of principal $G$-bundles with $\lambda$-connections}
	
	\author[S. Roy]{Sumit Roy}
 \address{Stat-Math Unit, Indian Statistical Institute, 203 B.T. Road, Kolkata 700 108, India.}
 \email{sumit.roy061@gmail.com}

 \author[A. Singh]{Anoop Singh}
\address{Department of Mathematical Sciences, Indian Institute of Technology (BHU), Varanasi- 221 005, India.}
\email{anoopsingh.mat@iitbhu.ac.in}

\subjclass[2020]{14D20, 14D23, 70G45, 14H60}
 \keywords{Principal bundles, Higgs bundles, Holomorphic connections, Semiprojective, Białynicki–Birula decomposition}
	\begin{abstract}
	    Let $X$ be a compact connected Riemann surface of genus $g \geq 2$ and $G$  a connected reductive affine algebraic group over $\mathbb{C}$. We prove the semiprojectivity of the moduli spaces of semistable $G$-Higgs bundles and $G$-bundles with $\lambda$-connections of fixed topological type $d\in \pi_1(G)$. As an application, in the smooth case we describe the resulting Białynicki–Birula decomposition and derive cohomological and motivic consequences.
	\end{abstract}
	\maketitle
	
	\section{Introduction}
	Let $X$ be a connected compact Riemann surface of genus $g \geq 2$. Let $G$ be a connected reductive affine algebraic group over $\mathbb{C}$. In this article, we consider the moduli space $\mathcal{M}^d_{\mathrm{Higgs}}(G)$ (resp. $\mathcal{M}^d_{\mathrm{conn}}(G)$) of semistable $G$-Higgs bundles (resp. holomorphic $G$-connections) of fixed topological type $d \in \pi_1(G)$ over the curve $X$. These two moduli spaces are not smooth. But if we consider the regularly stable locus (i.e. those elements for which the automorphism group coincides with the center of $G$), then the moduli spaces are smooth. 

 	Simpson in \cite{S94a} considered a family over $\mathbb{C}$, called the Hodge moduli space, whose fibers over $0$ and $1$ are exactly the moduli of semistable Higgs bundles and of holomorphic connections respectively. Also, he produced a homeomorphism between the moduli space of semistable Higgs bundles and the moduli space of holomorphic connections, which is known as the non-abelian Hodge correspondence (see \cite{S92}, \cite{S94a}, \cite{S94}). In general,  these two moduli spaces have singularities but if we consider the case where the rank and degree are coprime, then they are smooth.


	Let $V$ be a quasi-projective variety over $\mathbb{C}$, equipped with a $\mathbb{C}^*$-action $v \mapsto t\cdot v$, $v\in V, t \in \mathbb{C}^*$. We call that $V$ is \textit{semiprojective} if it satisfies:
	\begin{enumerate}
	    \item for all $v \in V$, the limit $$\lim_{t\to 0} (t\cdot v) \in V$$ exists in $V$,
	    \item the fixed point subvariety $V^{\mathbb{C}^*} \subset V$ is proper in $V$.
\end{enumerate}

The moduli space $\mathcal{M}^d_{\mathrm{Higgs}}(G)$ admits a standard $\mathbb{C}^*$-action, where scaling the Higgs field by any $t \in \mathbb{C}^*$ preserves semistability and stability of the bundle. The Hitchin map $$h : \mathcal{M}^d_{\mathrm{Higgs}}(G) \to \mathcal{H}\coloneqq \bigoplus_{i=1}^{r}\mathrm{H}^0(X, K_X^{d_i}),$$ where $r=\mathrm{rank}(G)$, is shown to be $\mathbb{C}^*$-equivariant, mapping Higgs bundles to invariant polynomials on $\mathfrak{g}$, the Lie algebra of $G$. Using the properness of the Hitchin map we show that for a semistable $G$-Higgs bundle $(E_G,\varphi)$, the limit $\lim_{t\to 0} (E_G,t\varphi)$ exists in $\mathcal{M}^d_{\mathrm{Higgs}}(G)$. We then show that the fixed points under the $\mathbb{C}^*$-action are contained within $h^{-1}(0)$ and are shown to be proper. This is important because the origin is the only fixed point in the Hitchin base under the $\mathbb{C}^*$-action. This proves that the moduli space $\mathcal{M}^d_{\mathrm{Higgs}}(G)$ is semiprojective (see Theorem \ref{Higgs}).

Similarly, we show that the principal Hodge moduli space $\mathcal{M}^{d}_{\mathrm{Hod}}(G)$ is semiprojective by showing that the limit $\lim_{t\to 0} (E_G,t\lambda,t\nabla)$ exists in $\pi^{-1}(0) \subset \mathcal{M}^d_\mathrm{Higgs}(G)$, where
\begin{align*}
\begin{split}
\pi: \mathcal{M}^{d}_\mathrm{Hod}(G) &\longrightarrow \mathbb{C}\\
(E,\lambda,\nabla) &\longmapsto \lambda,
\end{split}
\end{align*}
is the projection map (see Theorem \ref{Hodge}).

Finally, we provide two structural consequences of the semiprojectivity of $M^d_{\mathrm{Higgs}}(G)$. First, semiprojectivity allows one to apply the
Białynicki--Birula theory to the $\mathbb{C}^\ast$-action. As a result, the moduli space admits a decomposition into attracting sets indexed by the connected components of the fixed point locus. This provides a geometric stratification in which each stratum is organized around a fixed component. In particular, the global geometry of the moduli space is controlled by the geometry of its fixed point locus together with the dimensions of the corresponding attracting cells.

Second, this stratification has direct cohomological and motivic implications. At the level of compactly supported cohomology, the space decomposes additively as a direct sum of degree shifts of the cohomology of the fixed components. Thus the topology of the moduli space is determined by the
fixed point geometry and the combinatorics of the attracting
cells.

Similarly, in the Grothendieck ring of varieties,
the class of $M^d_{\mathrm{Higgs}}(G)$ can be written as a finite
sum of powers of the Lefschetz motive multiplied by the classes
of the fixed components.
In this sense, the motive of the moduli space is entirely
expressed in terms of its fixed locus.

These consequences show that semiprojectivity is not only a
formal property of the $\mathbb{C}^\ast$-action, but a geometric
structure that governs both the topology and the motive of the
moduli space.

 \section{Preliminaries}
	Let $K_X$ denote the holomorphic cotangent bundle on $X$. Let $G$ be a connected reductive affine algebraic group over $\mathbb{C}$ and let $\mathfrak{g}=\mathrm{Lie}(G)$ be the Lie algebra of $G$. The adjoint action of $G$ on $\mathfrak{g}$ is denoted by
	\begin{align*}
	    	\mathrm{ad} : G \longrightarrow \operatorname{End}(\mathfrak{g}).
      \end{align*}

\begin{definition}
     A \textit{holomorphic principal $G$-bundle} over $X$ is a holomorphic fiber bundle $E_G$, i.e. there is a surjective holomorphic map $p : E_G \to X$ and a holomorphic right action $\phi: E_G \times G \longrightarrow E_G$ of $G$ on $E_G$ such that the following conditions hold:
\begin{enumerate}
    \item $p \circ \phi = p \circ p_1$,  where  $p_1 : E_G \times G \longrightarrow E_G$ is the projection map, and 
    \item the map
    \begin{align*}
        E_G \times G &\longrightarrow E_G \times_X E_G \\
        (y,g) &\mapsto (y,\phi(y,g))
    \end{align*}
 to the fiber product is a biholomorphism.
    
\end{enumerate}
\end{definition}

The right action of $G$ on $E_G$ together with the adjoint action of $G$ on $\mathfrak{g}$ gives a $G$-action on $E_G \times \mathfrak{g}$ defined by
	\[
	(v,\xi)\cdot g = (v\cdot g, \mathrm{ad}(g^{-1})(\xi)), \hspace{0.2cm} \forall \hspace{0.1cm} (v,\xi)\in E_G\times \mathfrak{g},\hspace{0.1cm} g\in G.
	\] 
	The associated quotient bundle $$E_G \times^G \mathfrak{g} \coloneqq (E_G \times \mathfrak{g})/G$$ is called the \textit{adjoint vector bundle} of $E_G$ and it is denoted by $\mathrm{ad}(E_G)$. The topological type of a holomorphic principal $G$-bundle $E_G$ over $X$ corresponds to an element of the fundamental group $\pi_1(G)$ (see \cite{R75}) and this is a finitely generated abelian group. 
 
\begin{definition}
     A holomorphic principal $G$-bundle $E_G$ is called \textit{stable} (respectively, \textit{semistable}) if for all maximal parabolic subgroup $P \subset G$ and every holomorphic reduction $E_P$ of the structure group of $E_G$ to $P$,
	\[
	\deg(\mathrm{ad}(E_P)) < 0 \hspace{0.2cm} (\mathrm{respectively,} \hspace{0.2cm} \leq 0 \hspace{0.05cm})
	\]
	where $\mathrm{ad}(E_P) \subset \mathrm{ad}(E_G)$ is the adjoint vector bundle of $E_P$.  
\end{definition}	
\begin{definition}	
A stable principal $G$-bundle $E_G$ is called \textit{regularly stable} if $\mathrm{Aut}(E_G)= Z(G)$, i.e. the automorphism group of $E_G$ coincides with the center of $G$.
\end{definition}

Let $\mathcal{M}^d(G)$ denote the moduli space of semistable holomorphic $G$-bundles over $X$ of topological type $d\in \pi_1(G)$. It is well known that the moduli space $\mathcal{M}^d(G)$ is an irreducible normal projective complex variety of dimension
\[
\dim \mathcal{M}^d(G) = (g-1)\cdot \dim_{\mathbb{C}}G + \dim_\mathbb{C} Z(G),
\]
 (see \cite{R75}, \cite{R96} for more details). The moduli space $$\mathcal{M}^{d,rs}(G) \subset \mathcal{M}^d(G)$$ of regularly stable principal $G$-bundles is an open subvariety and is exactly the smooth locus of $\mathcal{M}^d(G)$ (see \cite[Corollary $3.4$]{BH12}).

\subsection{$G$-Higgs bundles}
\begin{definition}
    A principal \textit{$G$-Higgs bundle} over $X$ is a pair $(E_G,\varphi)$ where $E_G$ is a holomorphic principal $G$-bundle and 
    \[
    \varphi \in \mathrm{H}^0(X, \mathrm{ad}(E_G) \otimes K_X)
    \]
    is a holomorphic section, called the \textit{Higgs field} \cite{H87, S92}. 
\end{definition}

\begin{definition}
    A principal \textit{$G$-Higgs bundle} $(E_G,\varphi)$ is called \textit{stable} (respectively, \textit{semistable}) if for all holomorphic reduction $E_P$ of the structure group of $E_G$ to a $\varphi$-invariant maximal parabolic subgroup $P\subsetneq G$, i.e.  $\varphi \in \mathrm{H}^0(X, \mathrm{ad}(E_P) \otimes K_X)$ we have
\[
	\deg(\mathrm{ad}(E_P)) < 0 \hspace{0.2cm} (\mathrm{respectively,} \hspace{0.2cm} \leq 0 \hspace{0.05cm}).
\]
\end{definition}

Let $\mathcal{M}^d_{\mathrm{Higgs}}(G)$ denote the moduli space of semistable principal $G$-Higgs bundles over $X$ of topological type $d\in \pi_1(G)$. Following \cite{S94}, we know that $\mathcal{M}^d_{\mathrm{Higgs}}(G)$ is a normal irreducible quasi-projective variety over $\mathbb{C}$ of dimension
\[
\dim \mathcal{M}^d_{\mathrm{Higgs}}(G) = 2\dim \mathcal{M}^d(G)= 2(g-1)\cdot \dim_{\mathbb{C}}G + 2 \dim_\mathbb{C} Z(G).
\]
Observe that $\mathcal{M}^d(G) \subset \mathcal{M}^d_{\mathrm{Higgs}}(G)$ is closed subvariety of $\mathcal{M}^d_{\mathrm{Higgs}}(G)$ via the embedding
\begin{align*}
\mathcal{M}^d(G) &\longhookrightarrow \mathcal{M}^d_{\mathrm{Higgs}}(G) \\
E_G &\longmapsto (E_G,0).
\end{align*}	
There is a natural $\mathbb{C}^*$-action on $\mathcal{M}^d_{\mathrm{Higgs}}(G)$ given by
\begin{equation}\label{actionHiggs}
    t \cdot (E_G,\varphi) \coloneqq (E_G, t\varphi).
\end{equation}

From the deformation theory, the tangent space of $\mathcal{M}^{d,rs}(G)$ at $E_G$ is isomorphic to $\mathrm{H}^1(X, \mathrm{ad}(E_G))$. By Serre duality, we have
\[
\mathrm{H}^0(X, \mathrm{ad}(E_G) \otimes K_X) \cong \mathrm{H}^1(X, \mathrm{ad}(E_G))^*.
\]
Thus the cotangent bundle of $\mathcal{M}^{d,rs}(G)$, $$T^*\mathcal{M}^{d,rs}(G) \subset \mathcal{M}^{d}_{\mathrm{Higgs}}(G)$$ is an open subvariety of $\mathcal{M}^{d}_{\mathrm{Higgs}}(G)$.
	
\subsection{Holomorphic $G$-connections}
 Let $p$ denote the projection morphism from the total space of $E_G$ to $X$. For any open subset $U \subset X$, let $\mathcal{A}(U)$ denote the space of $G$-equivarient holomorphic vector fields on $p^{-1}(U)$. Let $\mathcal{A}$ be the coherent sheaf on $X$ which associates to any $U$ to the vector space $\mathcal{A}(U)$. The corresponding vector bundle is called the \textit{Atiyah bundle} for $E_G$ and it is denoted by $\mathrm{At}(E_G)$ (see \cite{A57}). In fact, it is given by the quotient
 \[
 \mathrm{At}(E_G) \coloneqq (TE_G)/G
 \]
 where $TE_G$ is the holomorphic tangent bundle of $E_G$; so $\mathrm{At}(E_G)$ is a holomorphic vector bundle over $E_G/G = X$. Consequently, we have an exact sequence of vector bundles
	\begin{equation}\label{Atiyah}
	0 \longrightarrow \mathrm{ad}(E_G) \longrightarrow \mathrm{At}(E_G) \overset{\eta}\longrightarrow TX \longrightarrow 0,
	\end{equation}
	where $TX$ is the holomorphic tangent bundle of $X$. The morphism $\eta$ is defined using the differential $dp$ of $p: E_G \to X$. Also, note that the adjoint bundle $\mathrm{ad}(E_G)$ is the subbundle of the tangent bundle $TE_G$ defined by the kernel of $dp$. The above short exact sequence (\ref{Atiyah}) is known as the \textit{Atiyah exact sequence} for the principal $G$-bundle $E_G$.
	
	A \textit{holomorphic connection} on $E_G$ is a holomorphic splitting of the Atiyah exact sequence, i.e., a holomorphic homomorphism
	\[
	\mathcal{D} : TX \longrightarrow \mathrm{At}(E_G)
	\]
	such that $$\eta \circ \mathcal{D} = \mathrm{id}_{TX}$$ for the morphism $\eta$ in the Atiyah sequence (\ref{Atiyah}). If $\mathcal{D}'$ is an another splitting of (\ref{Atiyah}), then $\mathcal{D}-\mathcal{D}'$ is a holomorphic homomorphism from $TX$ to $\mathrm{ad}(E_G)$. Conversely, for any holomorphic section $s \in \mathrm{H}^0(X, K_X \otimes \mathrm{ad}(E_G))$, if $\mathcal{D}$ is a splitting of (\ref{Atiyah}) then so is $\mathcal{D}+s$. Therefore, the space of all holomorphic connections on $E_G$ is an affine space for the vector space $\mathrm{H}^0(X, K_X \otimes \mathrm{ad}(E_G))$. 
	
	Since $X$ has complex dimension one, any holomorphic connection on $E_G$ is automatically a flat holomorphic connection on $E_G$ compatible with its holomorphic structure and since $\mathrm{H}^1(X, K_X \otimes \mathrm{ad}(E_G))$ parametrizes the space of all extensions of $TX$ by $\mathrm{ad}(E_G)$, the condition required for the existence of a flat holomorphic connection on the principal bundle $E_G$ is equivalent to the condition that if $\alpha \in \mathrm{H}^1(X, K_X \otimes \mathrm{ad}(E_G))$ corresponds to the sequence (\ref{Atiyah}) then $\alpha=0$ (see \cite{AB02}). 
	
	\begin{definition}
		A \textit{holomorphic $G$-connection} is a pair $(E_G,\mathcal{D})$ where $E_G$ is a holomorphic principal $G$-bundle and $\mathcal{D}$ is holomorphic connection on $E_G$.
	\end{definition}
	A holomorphic connection on a principal $G$-bundle implies semistability (see \cite{AB02}). Let $\mathcal{M}^d_\mathrm{conn}(G)$ denote the moduli space of holomorphic $G$-connections over $X$ of fixed topological type $d\in \pi_1(G)$. By \cite{BGH13}, the moduli space $\mathcal{M}^d_\mathrm{conn}(G)$ is a normal irreducible quasi-projective variety over $\mathbb{C}$ of dimension $$\dim \mathcal{M}^d_{\mathrm{conn}}(G)= \dim \mathcal{M}^d_{\mathrm{Higgs}}(G)= 2(g-1)\cdot \dim_{\mathbb{C}} G.$$

	\subsection{$\lambda$-connections}
		Let $p : E_G \to X$ be a holomorphic principal $G$-bundle over $X$ and let $\lambda \in \mathbb{C}$.
		
		\begin{definition}
		A $\lambda$\textit{-connection} on $E_G$ over $X$ is a holomorphic map of vector bundles
		\[
		\nabla : TX \longrightarrow \mathrm{At}(E_G)
		\]
		such that $\eta \circ \nabla = \lambda\cdot \mathrm{id}_{TX}$ for the morphism $\eta$ in the Atiyah sequence (\ref{Atiyah}).
		\end{definition}
		If $\nabla$ is a $\lambda$-connection on $E_G$ with $\lambda \ne 0$, then $\lambda^{-1}\nabla$ is a holomorphic $G$-connection on $E_G$. Therefore, $(E_G,\nabla)$ is automatically semistable for $\lambda \ne 0$. 
		
		Let $\mathcal{M}^d_\mathrm{Hod}(G)$ be the moduli space consisting of triples $(E_G,\lambda,\nabla)$, where $\lambda \in \mathbb{C}$, $E_G$ is a principal $G$-bundle over $X$ of topological type $d\in \pi_1(G)$ and $\nabla$ is a semistable $\lambda$-connection on $E_G$ (see \cite{S94}, \cite{BGH13} for details).
	
	There is a canonical surjective algebraic map 
	\begin{align}\label{proj}
 \begin{split}
	\pi: \mathcal{M}^d_\mathrm{Hod}(G) &\longrightarrow \mathbb{C}\\
 (E_G,\lambda,\nabla) &\longmapsto \lambda.
 \end{split}
	\end{align}
	
	The fiber $\pi^{-1}(0)$ over $0 \in \mathbb{C}$ is actually the moduli space of semistable $G$-Higgs bundles over $X$, i.e.
	\[
	\mathcal{M}^d_{\mathrm{Higgs}}(G) = \pi^{-1}(0) \subset \mathcal{M}^d_\mathrm{Hod}(G). 
	\]
	The natural $\mathbb{C}^*$-action (\ref{actionHiggs}) on $\mathcal{M}^d_{\mathrm{Higgs}}(G)$ extends to a $\mathbb{C}^*$-action on the Hodge moduli space $\mathcal{M}^d_\mathrm{Hod}(G)$ defined by 
	\begin{equation}\label{action}
	    t\cdot (E_G,\lambda,\nabla) \coloneqq (E_G,t\lambda,t\nabla).
	\end{equation}
	
	If we consider the case $\lambda = 1$, then the fiber $\pi^{-1}(1)$ is the moduli space $\mathcal{M}^d_{\mathrm{conn}}(G)$ of holomorphic $G$-connections on $X$.

\subsection{Semiprojectivity of the moduli space of $G$-Higgs bundles} Recall that the moduli space $\mathcal{M}^d_{\mathrm{Higgs}}(G)$ of $G$-Higgs bundles admits a standard $\mathbb{C}^*$-action
\[
t \cdot (E_G,\varphi) = (E_G,t\varphi),
\]
i.e. if $(E_G,\varphi)$ is semistable (resp. stable) then $(E_G,t\varphi)$ is semistable (resp. stable) for all $t\in \mathbb{C}^*$.

Let $\mathrm{rank}(G)=r$. Then the Hitchin map is given by
\begin{align*}
    h : \mathcal{M}^d_{\mathrm{Higgs}}(G) &\longrightarrow \mathcal{H} \coloneqq \bigoplus_{i=1}^{r}\mathrm{H}^0(X, K_X^{d_i})\\
    (E_G,\varphi) &\mapsto (p_1(\varphi),\dots, p_r(\varphi))
\end{align*}
where $\{p_1,\dots,p_r\}$ is a homogeneous basis for the ring of invariant polynomials on $\mathrm{Lie}(G)=\mathfrak{g}$ and $d_i$'s are degrees of $p_i$'s.
\begin{lemma}\label{equivariant}
The Hitchin map $h : \mathcal{M}^d_{\mathrm{Higgs}}(G) \to \mathcal{H}$ is $\mathbb{C}^*$-equivariant.
\end{lemma}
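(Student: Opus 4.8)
The plan is to equip the Hitchin base $\mathcal{H} = \bigoplus_{i=1}^{r} \mathrm{H}^0(X, K_X^{d_i})$ with the $\mathbb{C}^*$-action that assigns weight $d_i$ to the $i$-th summand, namely
\[
t \cdot (s_1, \dots, s_r) = (t^{d_1} s_1, \dots, t^{d_r} s_r),
\]
and then to verify that $h$ intertwines this action with the action $t\cdot(E_G,\varphi) = (E_G, t\varphi)$ on the moduli space. Since the entire statement reduces to the behaviour of the invariant polynomials under rescaling of the Higgs field, the essential input is the homogeneity of the $p_i$.

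First I would recall how the component $p_i(\varphi)$ is constructed. A homogeneous $G$-invariant polynomial $p_i$ of degree $d_i$ on $\mathfrak{g}$ descends, by $\mathrm{ad}$-invariance, to the associated bundles and thereby induces a bundle map $\mathrm{Sym}^{d_i}(\mathrm{ad}(E_G)) \to \mathcal{O}_X$. The Higgs field $\varphi \in \mathrm{H}^0(X, \mathrm{ad}(E_G) \otimes K_X)$ yields $\varphi^{\otimes d_i} \in \mathrm{H}^0(X, \mathrm{ad}(E_G)^{\otimes d_i} \otimes K_X^{d_i})$, and composing with the symmetrized evaluation of $p_i$ produces $p_i(\varphi) \in \mathrm{H}^0(X, K_X^{d_i})$, the $i$-th entry of $h(E_G,\varphi)$.

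Next I would exploit homogeneity. Replacing $\varphi$ by $t\varphi$ multiplies $\varphi^{\otimes d_i}$ by $t^{d_i}$, and since the evaluation induced by $p_i$ is $\mathbb{C}$-linear on each fibre, this gives the identity of sections $p_i(t\varphi) = t^{d_i} p_i(\varphi)$ in $\mathrm{H}^0(X, K_X^{d_i})$. Assembling the components,
\[
h(E_G, t\varphi) = \big(p_1(t\varphi), \dots, p_r(t\varphi)\big) = \big(t^{d_1} p_1(\varphi), \dots, t^{d_r} p_r(\varphi)\big) = t \cdot h(E_G, \varphi),
\]
which is exactly the asserted equivariance.

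The argument presents no serious obstacle; the only point requiring a little care is checking that the pointwise homogeneity $p_i(t\xi) = t^{d_i} p_i(\xi)$ for $\xi \in \mathfrak{g}$ globalizes to the stated identity for the twisted section $\varphi$, which is immediate from the functoriality of the associated-bundle construction together with the $\mathbb{C}$-linearity of tensoring with $K_X^{d_i}$. I would also remark that rescaling preserves semistability (as recalled above), so the action on the source is well-defined, while the target action is visibly algebraic; hence $h$ is genuinely a morphism of $\mathbb{C}^*$-varieties.
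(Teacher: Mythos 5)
Your proposal is correct and follows the same route as the paper: equip $\mathcal{H}$ with the weighted action $t\cdot(s_1,\dots,s_r)=(t^{d_1}s_1,\dots,t^{d_r}s_r)$ and deduce equivariance from the homogeneity $p_i(t\varphi)=t^{d_i}p_i(\varphi)$. The extra care you take in explaining why the pointwise homogeneity of $p_i$ globalizes to the twisted sections is a welcome elaboration, but the underlying argument is identical to the paper's.
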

\begin{proof}
The Hitchin base $\mathcal{H}$ admits a standard $\mathbb{C}^*$-action which is given by
\[
t\cdot (s_1,s_2,\dots , s_r) = (t^{d_1}s_1,t^{d_2}s_2,\dots , t^{d_r}s_r).
 \]
Let $h(E_G,\varphi) = (s_1,s_2,\dots , s_r)$. Then, 
\begin{align*}
    h(t\cdot (E_G,\varphi)) &=h(E_G,t\varphi)\\
    &=(p_1(t\varphi),\dots, p_r(t\varphi))\\
    &=(t^{d_1}p_1(\varphi),t^{d_2}p_2(\varphi),\dots , t^{d_r}p_r(\varphi))\\
    &=(t^{d_1}s_1,t^{d_2}s_2,\dots , t^{d_r}s_r)\\
    &= t\cdot (s_1,s_2,\dots , s_r)\\
    &= t\cdot h(E_G,\varphi).
\end{align*}

Hence, $h$ is $\mathbb{C}^*$-equivariant.
\end{proof}

To prove the semiprojectivity of the moduli space, we need to show that the moduli space $\mathcal{M}^d_{\mathrm{Higgs}}(G)$ satisfies the conditions in the definition of the semiprojective variety as given in the introduction.
\begin{lemma}\label{limit}
Let $(E_G,\varphi) \in \mathcal{M}^d_{\mathrm{Higgs}}(G)$ be a semistable $G$-Higgs bundle. Then the limit $\lim_{t\to 0} (E_G,t\varphi)$ exists in $\mathcal{M}^d_{\mathrm{Higgs}}(G)$.
\end{lemma}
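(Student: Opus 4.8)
The plan is to deduce the existence of the limit from the properness of the Hitchin map $h$ together with its $\mathbb{C}^*$-equivariance established in Lemma \ref{equivariant}. First I would record the key feature of the Hitchin base: the $\mathbb{C}^*$-action on $\mathcal{H} = \bigoplus_{i=1}^{r}\mathrm{H}^0(X, K_X^{d_i})$ has all weights $d_i$ strictly positive, since each $d_i \geq 1$ is the degree of a homogeneous generator of the invariant ring. Hence for every $s \in \mathcal{H}$ the limit $\lim_{t\to 0}(t\cdot s) = 0$ exists and equals the origin. Combining this with the equivariance $h(E_G, t\varphi) = t\cdot h(E_G,\varphi)$, the composite $t \mapsto h(E_G, t\varphi)$ extends to a morphism $\mathbb{A}^1 \to \mathcal{H}$ sending $0 \mapsto 0$.

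Next I would set up the orbit map as a morphism $\sigma : \mathbb{C}^* \to \mathcal{M}^d_{\mathrm{Higgs}}(G)$, $t \mapsto (E_G, t\varphi)$, which is well defined because scaling the Higgs field preserves semistability by \eqref{actionHiggs}. To produce the limit I would pass to a discrete valuation ring and invoke the valuative criterion. Concretely, take $R = \mathbb{C}[[t]]$ with fraction field $K = \mathbb{C}((t))$; the orbit $\sigma$ restricts to a $K$-point $\operatorname{Spec} K \to \mathcal{M}^d_{\mathrm{Higgs}}(G)$, and composing with $h$ and using the previous paragraph yields an $R$-point $\operatorname{Spec} R \to \mathcal{H}$ whose closed point maps to $0 \in \mathcal{H}$. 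These assemble into a commutative square over the morphism $h$.

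The crux is then the \emph{properness} of the Hitchin map $h : \mathcal{M}^d_{\mathrm{Higgs}}(G) \to \mathcal{H}$, which I would invoke as a known result, established for classical groups by Hitchin and Nitsure and extended to an arbitrary connected reductive $G$. Properness supplies the valuative criterion: there is a unique lift $\operatorname{Spec} R \to \mathcal{M}^d_{\mathrm{Higgs}}(G)$ filling in the square. The image of the closed point of $\operatorname{Spec} R$ is then a well-defined point of $\mathcal{M}^d_{\mathrm{Higgs}}(G)$, and by construction it is precisely $\lim_{t\to 0}(E_G, t\varphi)$; uniqueness of this limit follows from the separatedness of the quasi-projective variety $\mathcal{M}^d_{\mathrm{Higgs}}(G)$.

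I expect the main obstacle to be justifying and correctly citing the properness of $h$ for an arbitrary connected reductive $G$, since the classical statement is usually phrased only for $\mathrm{GL}(n,\mathbb{C})$ or $\mathrm{SL}(n,\mathbb{C})$, and in verifying that the orbit $K$-point together with the extended base point genuinely defines a morphism $\operatorname{Spec} R \to \mathcal{H}$ over which the criterion applies. A secondary point to confirm is the positivity of every Hitchin weight $d_i$, which is exactly what forces the limit in the base to land at the origin $0 \in \mathcal{H}$ and lets the argument close.
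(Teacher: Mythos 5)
Your proposal follows the same route as the paper's proof: use the $\mathbb{C}^*$-equivariance of the Hitchin map to see that $h(E_G,t\varphi)=t\cdot h(E_G,\varphi)\to 0$ as $t\to 0$, then invoke the properness of $h$ via the valuative criterion to extend the orbit map over $t=0$. Your additional remarks (positivity of the weights $d_i$, the explicit DVR setup, uniqueness from separatedness) only make explicit what the paper leaves implicit.
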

\begin{proof}
Consider the morphism
\[
f: \mathbb{C}^* \longrightarrow \mathcal{M}^d_{\mathrm{Higgs}}(G)
\]
given by $t \mapsto (E_G,t\varphi)$. Since $h$ is $\mathbb{C}^*$-equivariant (by \ref{equivariant}), we have
\[
\lim_{t\to 0} h(E_G,t\varphi) = \lim_{t \to 0} t \cdot h(E_G,\varphi) = 0.
\]
Thus, the composition map $F \coloneqq h \circ f : \mathbb{C}^* \longrightarrow \mathcal{H}$ extends to a morphism $\hat{F} : \mathbb{C} \longrightarrow \mathcal{H}$. By valuative criterion of properness (since $h$ is proper) $f$ extends to a morphism \[
\hat{f}: \mathbb{C} \longrightarrow \mathcal{M}^d_{\mathrm{Higgs}}(G).
\]
Hence, $\lim_{t\to 0} (E_G,t\varphi)$ exists in $\mathcal{M}^d_{\mathrm{Higgs}}(G)$.
\end{proof}

\begin{lemma}\label{fixed}
The fixed point locus under the $\mathbb{C}^*$-action on $\mathcal{M}^d_{\mathrm{Higgs}}(G)$ is proper in $h^{-1}(0)$.
\end{lemma}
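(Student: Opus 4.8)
The strategy is to first locate the fixed-point locus inside the central fibre $h^{-1}(0)$ of the Hitchin map, and then to deduce its properness from the properness of $h$.

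Let $V^{\mathbb{C}^*} \subset \mathcal{M}^d_{\mathrm{Higgs}}(G)$ denote the fixed-point locus of the action (\ref{actionHiggs}). The first step is to show $V^{\mathbb{C}^*} \subseteq h^{-1}(0)$. Take a fixed point $(E_G, \varphi)$, so that $t \cdot (E_G, \varphi) = (E_G, \varphi)$ in the moduli space for every $t \in \mathbb{C}^*$. Applying $h$ and using the $\mathbb{C}^*$-equivariance from Lemma \ref{equivariant}, we obtain
\[
t \cdot h(E_G, \varphi) = h(t \cdot (E_G, \varphi)) = h(E_G, \varphi), \qquad t \in \mathbb{C}^*,
\]
so that $h(E_G, \varphi) \in \mathcal{H}$ is fixed by the action $t \cdot (s_1, \dots, s_r) = (t^{d_1}s_1, \dots, t^{d_r}s_r)$. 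Since each degree $d_i$ of the invariant polynomials $p_i$ is a strictly positive integer, the weights $t^{d_i}$ are all nontrivial, and hence the origin $0 \in \mathcal{H}$ is the unique fixed point of this linear action. Therefore $h(E_G, \varphi) = 0$, i.e. $(E_G, \varphi) \in h^{-1}(0)$; this gives $V^{\mathbb{C}^*} \subseteq h^{-1}(0)$.

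The second step is the properness itself. The fixed-point locus $V^{\mathbb{C}^*}$ of the $\mathbb{C}^*$-action on the separated variety $\mathcal{M}^d_{\mathrm{Higgs}}(G)$ is a closed subvariety. On the other hand, the Hitchin map $h$ is proper, so the central fibre $h^{-1}(0)$ is a proper closed subvariety of $\mathcal{M}^d_{\mathrm{Higgs}}(G)$. Combining the two facts, $V^{\mathbb{C}^*}$ is a closed subvariety of the proper variety $h^{-1}(0)$, and a closed subscheme of a proper scheme is proper. Hence $V^{\mathbb{C}^*}$ is proper, and in particular it is proper in $h^{-1}(0)$.

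The main obstacle is the input that $h$ is proper: this is the properness theorem for the Hitchin map, which we invoke as a known result, and from which the compactness of $h^{-1}(0)$ follows immediately; given that, the remaining arguments (equivariance, uniqueness of the fixed point of $\mathcal{H}$, and closedness of the fixed locus) are formal. A minor point to check carefully is that every degree $d_i$ is strictly positive, so that $0$ is genuinely the only $\mathbb{C}^*$-fixed point of $\mathcal{H}$; this holds because the ring of $\mathrm{ad}$-invariant polynomials on $\mathfrak{g}$ has no nonconstant invariants of degree $0$.
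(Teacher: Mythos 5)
Your proof is correct and follows essentially the same route as the paper: use the $\mathbb{C}^*$-equivariance of the Hitchin map together with the fact that $0$ is the only fixed point of $\mathcal{H}$ to place the fixed locus inside $h^{-1}(0)$, then combine closedness of the fixed locus with properness of $h^{-1}(0)$ (from properness of $h$). The only difference is that you spell out why the origin is the unique fixed point of $\mathcal{H}$ (positivity of the degrees $d_i$), which the paper simply asserts.
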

\begin{proof}
Note that the origin is the only point on the Hitchin base $\mathcal{H}$ which is fixed under the $\mathbb{C}^*$-action. Thus, the fixed point subvariety $\mathcal{H}^{\mathbb{C}^*}$ is the singleton set $\{0\}$. Since $h$ is $\mathbb{C}^*$-equivariant, the fixed point locus $\mathcal{M}^d_{\mathrm{Higgs}}(G)^{\mathbb{C}^*}$ must be closed in $h^{-1}(\mathcal{H}^{\mathbb{C}^*}) = h^{-1}(0)$. Also, since $h$ is proper, so is $h^{-1}(0)$. Hence, $\mathcal{M}^d_{\mathrm{Higgs}}(G)^{\mathbb{C}^*}$ is proper in $h^{-1}(0)$.
\end{proof}
\begin{theorem}\label{Higgs}
The moduli space $\mathcal{M}^d_{\mathrm{Higgs}}(G)$ of semistable $G$-Higgs bundles is a semiprojective variety.
\end{theorem}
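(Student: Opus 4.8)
The plan is to verify directly that $\mathcal{M}^d_{\mathrm{Higgs}}(G)$, equipped with the standard $\mathbb{C}^*$-action (\ref{actionHiggs}), meets the two requirements in Definition \ref{semiprojective}. The ambient space is already known to be a normal irreducible quasi-projective variety over $\mathbb{C}$ (as recalled above), so the hypothesis on $V$ is satisfied and only the existence of limits and the properness of the fixed locus need to be checked. Each of these has, in effect, been prepared for by the preceding lemmas, and the argument is mainly one of assembling them.

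For the first condition I would simply invoke Lemma \ref{limit}: for every semistable $G$-Higgs bundle $(E_G,\varphi)$ the limit $\lim_{t\to 0}(E_G,t\varphi)$ exists in $\mathcal{M}^d_{\mathrm{Higgs}}(G)$, which is precisely the statement that $\lim_{t\to 0}(t\cdot v)$ exists for all $v\in V$. For the second condition I would show that the fixed point subvariety $\mathcal{M}^d_{\mathrm{Higgs}}(G)^{\mathbb{C}^*}$ is a complete variety. Being the fixed locus of an algebraic $\mathbb{C}^*$-action, it is closed in $\mathcal{M}^d_{\mathrm{Higgs}}(G)$; by Lemma \ref{fixed} it is moreover contained in, and closed in, the fiber $h^{-1}(0)$. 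Since the Hitchin map $h$ is proper and $\{0\}\subset\mathcal{H}$ is a closed point, the fiber $h^{-1}(0)$ is proper over $\mathbb{C}$, and a closed subvariety of a proper variety is proper. Hence $\mathcal{M}^d_{\mathrm{Higgs}}(G)^{\mathbb{C}^*}$ is proper, giving condition (2), and the two conditions together establish semiprojectivity.

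The step that deserves the most care is the passage from ``proper in $h^{-1}(0)$'', as furnished by Lemma \ref{fixed}, to the absolute properness of the fixed locus needed in Definition \ref{semiprojective}. This rests on the global nilpotent cone $h^{-1}(0)$ being an \emph{absolutely} proper variety, which is exactly the point at which the properness of the Hitchin map is used; the same properness also underlies the limit argument of Lemma \ref{limit}. I therefore expect no genuine obstacle at the level of the theorem itself: once the properness of $h$ (and hence of $h^{-1}(0)$) is granted, the result follows by combining Lemmas \ref{limit} and \ref{fixed} with the definition.
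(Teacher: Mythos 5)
Your proposal is correct and follows essentially the same route as the paper: the theorem is proved by combining Lemma \ref{limit} (existence of limits) with Lemma \ref{fixed} (properness of the fixed locus inside the proper fiber $h^{-1}(0)$ of the Hitchin map), exactly as you do. Your extra remark on upgrading ``proper in $h^{-1}(0)$'' to absolute properness via the properness of the nilpotent cone is precisely the content of the paper's Lemma \ref{fixed} and its proof, so no new ingredient is introduced.
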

\begin{proof}
Since the moduli space $\mathcal{M}^d_{\mathrm{Higgs}}(G)$ is a quasi-projective variety, semiprojectivity follows from the Lemma \ref{limit} and \ref{fixed}.
\end{proof}

\subsection{Semiprojectivity of principal Hodge moduli space}
Recall the $\mathbb{C}^*$-action on $\mathcal{M}^d_\mathrm{Hod}(G)$ given as in (\ref{action}).
\begin{lemma}\label{limit1}
Let $(E_G,\lambda,\nabla) \in \mathcal{M}^d_\mathrm{Hod}(G)$ be a $\lambda$-connection on $E_G$. Then the limit $$\lim_{t\to 0} (E_G,t\lambda,t\nabla)$$ exists in $\pi^{-1}(0) \subset \mathcal{M}^d_\mathrm{Hod}(G)$, where $\pi :  \mathcal{M}^d_\mathrm{Hod}(G) \longrightarrow \mathbb{C}$ is the projection map (\ref{proj}).
\end{lemma}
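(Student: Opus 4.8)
The plan is to mirror the proof of Lemma \ref{limit}, replacing the Hitchin map $h$ on the Higgs moduli space by a proper, $\mathbb{C}^*$-equivariant ``Hodge--Hitchin'' map on $\mathcal{M}^d_\mathrm{Hod}(G)$, and then to invoke the valuative criterion of properness. First I would record the behaviour of the projection: by the definition of the action (\ref{action}) one has $\pi(t\cdot(E_G,\lambda,\nabla))=t\lambda$, so the composite of $\pi$ with the orbit morphism
\[
f:\mathbb{C}^*\longrightarrow \mathcal{M}^d_\mathrm{Hod}(G),\qquad t\longmapsto (E_G,t\lambda,t\nabla),
\]
is $t\mapsto t\lambda$, which visibly extends to $\mathbb{C}$ with value $0$ at $t=0$. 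Consequently, once the limit is known to exist, continuity of $\pi$ forces it into $\pi^{-1}(0)=\mathcal{M}^d_\mathrm{Higgs}(G)$; this already disposes of the final clause of the statement.

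To produce the limit itself, I would use a Hodge--Hitchin map $h_\mathrm{Hod}:\mathcal{M}^d_\mathrm{Hod}(G)\to\mathcal{H}$ extending $h$, defined through the characteristic (spectral) data of a $\lambda$-connection rather than by naively evaluating the $p_i$ on $\nabla$ (which is not $\mathcal{O}_X$-linear when $\lambda\neq 0$); concretely it is the Hitchin map attached to the sheaf of rings interpolating between $\mathcal{O}_X$ and $\mathcal{D}_X$, and it restricts to $h$ on the fiber $\pi^{-1}(0)$. Exactly as in Lemma \ref{equivariant}, it is $\mathbb{C}^*$-equivariant for the weighted action $t\cdot(s_1,\dots,s_r)=(t^{d_1}s_1,\dots,t^{d_r}s_r)$ on $\mathcal{H}$, because scaling $\nabla$ by $t$ scales the degree-$d_i$ invariant by $t^{d_i}$. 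Forming the combined morphism $\Phi=(h_\mathrm{Hod},\pi):\mathcal{M}^d_\mathrm{Hod}(G)\to\mathcal{H}\times\mathbb{C}$, equivariance then gives
\[
\Phi(f(t))=\bigl(t^{d_1}s_1,\dots,t^{d_r}s_r,\,t\lambda\bigr),\qquad (s_1,\dots,s_r)=h_\mathrm{Hod}(E_G,\lambda,\nabla),
\]
and since every $d_i\geq 1$ this extends to a morphism $\mathbb{C}\to\mathcal{H}\times\mathbb{C}$ taking the value $(0,0)$ at $t=0$.

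The concluding step is the valuative criterion, applied verbatim as in Lemma \ref{limit}: if $\Phi$ is proper, then because $\Phi\circ f$ extends over $0$ the morphism $f$ itself extends to $\hat f:\mathbb{C}\to\mathcal{M}^d_\mathrm{Hod}(G)$, so that $\lim_{t\to 0}(E_G,t\lambda,t\nabla)=\hat f(0)$ exists, and $\pi(\hat f(0))=0$ locates it in $\pi^{-1}(0)$. The main obstacle is precisely the properness of $\Phi$: unlike $\pi$ on its own, whose fibers are the non-proper moduli of $\lambda$-connections, the combined map $\Phi$ is proper. This is not one of the facts established earlier in the text, so I would not reprove it but rather invoke Simpson's construction of the moduli of $\Lambda$-modules and the projectivity of the associated Hitchin map over the $\lambda$-line (cf. \cite{S94}, \cite{BGH13}), after checking that the $h_\mathrm{Hod}$ used here coincides with the Hitchin map furnished by that construction and that it is $\mathbb{C}^*$-equivariant as claimed. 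Granting this properness, the remainder of the argument is formally identical to that of Lemma \ref{limit}.
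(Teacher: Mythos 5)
Your preliminary observation --- that $\pi(t\cdot(E_G,\lambda,\nabla))=t\lambda\to 0$, so any limit, once it exists, automatically lies in $\pi^{-1}(0)$ --- is correct. But the engine of your argument is an object that does not exist. You need a proper, $\mathbb{C}^*$-equivariant morphism $\Phi=(h_{\mathrm{Hod}},\pi):\mathcal{M}^d_{\mathrm{Hod}}(G)\to\mathcal{H}\times\mathbb{C}$ extending the Hitchin map, and you rightly flag its properness as ``the main obstacle,'' deferring it to \cite{S94} and \cite{BGH13}; neither reference constructs such a map, and in characteristic zero none can exist. If $\Phi$ were proper, its restriction to $\pi^{-1}(1)=\mathcal{M}^d_{\mathrm{conn}}(G)$ would be a proper morphism to the affine variety $\mathcal{H}\times\{1\}$. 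Already for $G=\mathrm{GL}(1,\mathbb{C})$ and $d=0$, the fiber $\pi^{-1}(1)$ is the universal vector extension of $\mathrm{Jac}(X)$, which is anti-affine (it carries no nonconstant regular functions); the restricted map would therefore be constant, and properness would force $\pi^{-1}(1)$ to be proper, which it is not, since it is a torsor under the vector group $\mathrm{H}^0(X,K_X)$ and so contains closed non-proper fibers $\mathbb{A}^g$. The difficulty is intrinsic: for $\lambda\neq 0$ a $\lambda$-connection is not $\mathcal{O}_X$-linear, so the invariant polynomials $p_i$ cannot be evaluated on $\nabla$ and there is no spectral construction; the proper ``Hodge--Hitchin'' morphism built from the spectral data of $\nabla$ (via $p$-curvature) is a characteristic-$p$ phenomenon with no characteristic-zero analogue.

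The paper's proof takes a genuinely different route, which is the one you should follow. It forms the $\mathbb{C}^*$-orbit family $(\pi_1^*E_G,\,t\lambda,\,t\pi_1^*\nabla)$ over $X\times\mathbb{C}^*$, whose fibers are semistable for $t\neq 0$, and invokes Simpson's semistable-reduction theorem for families of $\lambda$-connections, \cite[Theorem 10.1]{S97} (cf.\ \cite[Corollary 10.2]{S97}). This Langton-type argument extends the family across $t=0$ with semistable special fiber, and that special fiber is by definition the limit of the orbit, lying in $\pi^{-1}(0)$. No properness of any Hitchin-type map on $\mathcal{M}^d_{\mathrm{Hod}}(G)$ is needed --- or available. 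Your valuative-criterion template from Lemma \ref{limit} simply does not transport to the Hodge moduli space.
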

\begin{proof}
The proof is similar to \cite[Corollary 10.2]{S97}. Consider the following projections
\[
\pi_1 : X \times \mathbb{C}^* \longrightarrow X \hspace{0.3cm} \mathrm{and} \hspace{0.3cm} \pi_2 : X \times \mathbb{C} \longrightarrow \mathbb{C}.
\]
Now consider the $\mathbb{C}^*$-flat family over $\pi_2: X \times \mathbb{C} \longrightarrow \mathbb{C}$ given by
\[
(\mathcal{E},t\lambda,\nabla_{\pi_2}) \coloneqq (\pi_1^*E_G,t\lambda,t\pi_1^*\nabla)
\]
For any $t\ne 0$, we know that a principal $t\lambda$-connection $(E_G,t\lambda,t\nabla)$ is semistable if and only if $(E_G,\lambda,\nabla)$ is semistable. Therefore, the fibers of the above family are semistable for $t\ne 0$. Following \cite[Theorem 10.1]{S97}, there exist a $\mathbb{C}$-flat family $(\overline{\mathcal{E}},\overline{t\lambda},\overline{\nabla_{\pi_2}})$ over $\pi_2 : X \times \mathbb{C} \longrightarrow \mathbb{C}$ such that 
\[
\left.(\overline{\mathcal{E}},\overline{t\lambda},\overline{\nabla_{\pi_2}})\right|_{X\times \mathbb{C}^*} \cong (\pi_1^*E_G,t\lambda,t\pi_1^*\nabla)
\]
and $\left.(\overline{\mathcal{E}},\overline{t\lambda},\overline{\nabla_{\pi_2}})\right|_{X\times \{0\}}$ is semistable. Therefore,
\[
\left.(\overline{\mathcal{E}},\overline{t\lambda},\overline{\nabla_{\pi_2}})\right|_{X\times \{0\}} \in \pi^{-1}(0)
\]
is the limit of the $\mathbb{C}^*$-orbit of $(E_G,\lambda,\nabla)$ at $t=0$ in the moduli space $\mathcal{M}^d_\mathrm{Hod}(G)$.
\end{proof}

\begin{lemma}\label{fixed1}
The fixed point subvariety $\mathcal{M}^d_\mathrm{Hod}(G)^{\mathbb{C}^*}$ of $\mathcal{M}^d_\mathrm{Hod}(G)$ is proper in $\mathcal{M}^d_\mathrm{Hod}(G)$.
\end{lemma}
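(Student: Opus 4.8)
The plan is to identify the $\mathbb{C}^*$-fixed locus of the Hodge moduli space with the $\mathbb{C}^*$-fixed locus of the Higgs moduli space, and then to invoke Lemma \ref{fixed}. The structural fact driving the argument is that the projection $\pi$ in (\ref{proj}) is $\mathbb{C}^*$-equivariant, where the target $\mathbb{C}$ carries the standard scaling action $t \cdot \lambda = t\lambda$. This is immediate from the definition of the action (\ref{action}): for any triple we have $\pi(t \cdot (E_G,\lambda,\nabla)) = \pi(E_G,t\lambda,t\nabla) = t\lambda = t \cdot \pi(E_G,\lambda,\nabla)$.

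First I would observe that the only $\mathbb{C}^*$-fixed point of $\mathbb{C}$ under scaling is the origin. By equivariance of $\pi$, every fixed point of $\mathcal{M}^d_\mathrm{Hod}(G)$ must therefore map to $0 \in \mathbb{C}$, giving the containment
$$\mathcal{M}^d_\mathrm{Hod}(G)^{\mathbb{C}^*} \subseteq \pi^{-1}(0) = \mathcal{M}^d_{\mathrm{Higgs}}(G).$$
On the fiber $\pi^{-1}(0)$ a triple has the form $(E_G,0,\nabla)$ with $\nabla$ a $0$-connection, i.e. a Higgs field $\varphi \in \mathrm{H}^0(X,\mathrm{ad}(E_G)\otimes K_X)$, and the restricted action (\ref{action}) reads $t\cdot(E_G,0,\varphi) = (E_G,0,t\varphi)$, which is precisely the Higgs bundle action (\ref{actionHiggs}). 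Hence the two fixed loci coincide,
$$\mathcal{M}^d_\mathrm{Hod}(G)^{\mathbb{C}^*} = \mathcal{M}^d_{\mathrm{Higgs}}(G)^{\mathbb{C}^*}.$$

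Finally I would apply Lemma \ref{fixed}, which established that $\mathcal{M}^d_{\mathrm{Higgs}}(G)^{\mathbb{C}^*}$ is closed in $h^{-1}(0)$ and that $h^{-1}(0)$ is proper (the Hitchin map $h$ being proper). A closed subvariety of a proper variety is proper, so $\mathcal{M}^d_{\mathrm{Higgs}}(G)^{\mathbb{C}^*}$ is proper, and the identification above transports this to $\mathcal{M}^d_\mathrm{Hod}(G)^{\mathbb{C}^*}$. The only step requiring genuine care is the containment in $\pi^{-1}(0)$, which rests entirely on the equivariance of $\pi$; beyond that the argument is a formal reduction of the Hodge case to the Higgs case already settled in Lemma \ref{fixed}, so I do not anticipate a serious obstacle.
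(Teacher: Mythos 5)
Your proposal is correct and takes essentially the same route as the paper: identify $\mathcal{M}^d_\mathrm{Hod}(G)^{\mathbb{C}^*}$ with $\mathcal{M}^d_{\mathrm{Higgs}}(G)^{\mathbb{C}^*}$ inside $\pi^{-1}(0)$ and invoke Lemma \ref{fixed}. In fact you supply the justification (equivariance of $\pi$ and the fact that $0$ is the only fixed point of the scaling action on $\mathbb{C}$) that the paper's proof leaves implicit when it asserts the two fixed loci are "exactly the same."
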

\begin{proof}
The $\mathbb{C}^*$-action $\mathcal{M}^d_\mathrm{Hod}(G)$ is given by 
\[
t\cdot (E_G,\lambda,\nabla) = (E_G,t\lambda,t\nabla).
\]
Thus the fixed point subvariety is exactly same as the fixed point subvariety under the $\mathbb{C}^*$-action on $\pi^{-1}(0) = \mathcal{M}^d_{\mathrm{Higgs}}(G)$. Hence by Lemma \ref{fixed}, the fixed point subvariety $$\mathcal{M}^d_{\mathrm{Hod}}(G)^{\mathbb{C}^*} \subset \mathcal{M}^d_\mathrm{Hod}(G)$$ is proper.
\end{proof}

\begin{theorem}\label{Hodge}
The moduli space $\mathcal{M}^{d}_\mathrm{Hod}(G)$ is a semiprojective variety over $\mathbb{C}$.
\end{theorem}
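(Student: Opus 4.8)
The plan is to verify directly the two defining conditions of semiprojectivity (Definition \ref{semiprojective}) for the quasi-projective variety $\mathcal{M}^{d}_\mathrm{Hod}(G)$ equipped with the $\mathbb{C}^*$-action (\ref{action}), in exact parallel with the argument already carried out for Theorem \ref{Higgs}. First I would recall that, by \cite{S94}, \cite{BGH13}, the Hodge moduli space $\mathcal{M}^{d}_\mathrm{Hod}(G)$ is a normal irreducible quasi-projective variety over $\mathbb{C}$, so that the notion of semiprojectivity applies with respect to the action $t\cdot(E_G,\lambda,\nabla)=(E_G,t\lambda,t\nabla)$.

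For the first condition, namely existence of the limit $\lim_{t\to 0}(t\cdot v)$ for every point $v=(E_G,\lambda,\nabla)$, I would invoke Lemma \ref{limit1}, which already produces this limit and, moreover, identifies it as a point of the fiber $\pi^{-1}(0)=\mathcal{M}^d_{\mathrm{Higgs}}(G)$. Thus every $\mathbb{C}^*$-orbit flows, as $t\to 0$, into the Higgs locus, and in particular the limit exists in $\mathcal{M}^{d}_\mathrm{Hod}(G)$. For the second condition, properness of the fixed-point locus, I would appeal to Lemma \ref{fixed1}: since $\pi$ is equivariant for the scaling action on $\mathbb{C}$, whose only fixed point is the origin, every $\mathbb{C}^*$-fixed triple must have $\lambda=0$, so that $\mathcal{M}^{d}_\mathrm{Hod}(G)^{\mathbb{C}^*}$ coincides with the fixed-point subvariety of $\pi^{-1}(0)=\mathcal{M}^d_{\mathrm{Higgs}}(G)$, which is proper by Lemma \ref{fixed}. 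Combining these two verifications establishes both clauses of Definition \ref{semiprojective} and hence the theorem.

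The substantive difficulty does not reside in the theorem itself, which is a formal assembly of the two preceding lemmas, but rather in the content of Lemma \ref{limit1}: the existence of the degeneration as $t\to 0$ is \emph{not} a purely formal matter, because one must exhibit a flat limiting family over $X\times\mathbb{C}$ whose central fiber is again \emph{semistable}, rather than merely a coherent limit that could fail to lie in the moduli space. This is precisely the point at which Simpson's properness result \cite[Theorem 10.1]{S97} is essential, as it guarantees that the orbit of $(E_G,\lambda,\nabla)$ extends across $t=0$ to a semistable $G$-Higgs bundle. Once that degeneration is secured, the remaining steps — matching the fixed-point loci and invoking quasi-projectivity — are immediate, and so the main obstacle has already been discharged in the earlier lemmas.
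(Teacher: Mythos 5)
Your proposal is correct and follows essentially the same route as the paper: the theorem is deduced by combining Lemma \ref{limit1} (existence of limits, landing in $\pi^{-1}(0)$ via Simpson's result) with Lemma \ref{fixed1} (properness of the fixed-point locus, reduced to that of $\mathcal{M}^d_{\mathrm{Higgs}}(G)^{\mathbb{C}^*}$). Your added observation that the genuine content lies in Lemma \ref{limit1} rather than in the formal assembly is accurate and matches the structure of the paper's argument.
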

\begin{proof}
    This follows from Lemma \ref{limit1} and Lemma \ref{fixed1}.
\end{proof}

\section{Cohomological and motivic consequences in the smooth case}

In this section we record structural consequences of the
$\mathbb{C}^\ast$-action on $M^d_{\mathrm{Higgs}}(G)$.
Since the classical Białynicki--Birula theorem requires smoothness,
we impose the additional hypothesis that
$M^d_{\mathrm{Higgs}}(G)$ is smooth. This holds, for example, when $G = GL_n(\mathbb{C})$  and
$\gcd(n,d)=1$. Under this assumption, $M^d_{\mathrm{Higgs}}(G)$ is a smooth
quasi-projective semiprojective variety.

\subsection*{Białynicki--Birula decomposition}

Let
\[
F = \left(M^d_{\mathrm{Higgs}}(G)\right)^{\mathbb{C}^\ast}
\]
be the fixed point locus of the $\mathbb{C}^\ast$-action.
By semiprojectivity, $F$ is a proper closed subvariety.
Write
\[
F = \bigsqcup_{i\in I} F_i
\]
for the decomposition into connected components.

For each component $F_i$, define the attracting set
\[
S_i =
\left\{
x \in M^d_{\mathrm{Higgs}}(G)
\;\middle|\;
\lim_{t\to 0} t\cdot x \in F_i
\right\}.
\]

Since $M^d_{\mathrm{Higgs}}(G)$ is semiprojective,
the limit exists for every point.
By the Białynicki--Birula theorem for smooth
semiprojective varieties (see
\cite{BB73, HT03}),
each $S_i$ is a locally closed smooth subvariety and
the morphism
\[
\pi_i : S_i \longrightarrow F_i,
\qquad
x \longmapsto \lim_{t\to 0} t\cdot x,
\]
is an algebraic vector bundle.

For $x \in F_i$, the fiber dimension equals the
dimension of the positive weight subspace in the
weight decomposition of the Zariski tangent space
$T_x M^d_{\mathrm{Higgs}}(G)$.
We denote this integer by $n_i$.

Moreover,
\[
M^d_{\mathrm{Higgs}}(G)
=
\bigsqcup_{i\in I} S_i
\]
is a disjoint decomposition.

\subsection*{Cohomological consequence}

We now describe the consequence for compactly supported
cohomology with complex coefficients.

Let $E \to B$ be an algebraic vector bundle of rank $n$
between complex quasi-projective varieties.
Then compactly supported cohomology satisfies
\[
H_c^k(E,\mathbb{C})
\cong
H_c^{\,k-2n}(B,\mathbb{C}),
\]
since the fiber $\mathbb{A}^n$ has compactly supported
cohomology concentrated in degree $2n$.

\begin{theorem}
Assume $M^d_{\mathrm{Higgs}}(G)$ is smooth.
Then there is an isomorphism of graded vector spaces
\[
H_c^k\!\left(M^d_{\mathrm{Higgs}}(G),\mathbb{C}\right)
\cong
\bigoplus_{i\in I}
H_c^{\,k-2n_i}(F_i,\mathbb{C}).
\]
\end{theorem}

\begin{proof}
Since
\[
M^d_{\mathrm{Higgs}}(G)
=
\bigsqcup_{i\in I} S_i
\]
is a disjoint union of locally closed subvarieties,
compactly supported cohomology is additive:
\[
H_c^k\!\left(M^d_{\mathrm{Higgs}}(G)\right)
=
\bigoplus_{i\in I}
H_c^k(S_i).
\]

Each $S_i \to F_i$ is a vector bundle of rank $n_i$.
Hence
\[
H_c^k(S_i)
\cong
H_c^{\,k-2n_i}(F_i).
\]
Combining these isomorphisms yields the result.
\end{proof}

In particular, the compactly supported cohomology of
$M^d_{\mathrm{Higgs}}(G)$ is completely determined by
the cohomology of the fixed components together with
the integers $n_i$.

\subsection*{Motivic consequence}

We now describe the analogous statement at the level of
the Grothendieck ring of varieties.

Let $K(\mathrm{Var}_{\mathbb{C}})$ denote the Grothendieck
ring of complex quasi-projective varieties.
It is generated by symbols $[Z]$ for quasi-projective
varieties $Z$, subject to the scissor relation
\[
[Z] = [U] + [Z \setminus U]
\]
for every open subvariety $U\subset Z$.
Multiplication is defined by
\[
[Z_1]\cdot[Z_2] = [Z_1\times Z_2].
\]

The class
\[
\mathbb{L} = [\mathbb{A}^1]
\]
is called the Lefschetz motive.
If $E \to B$ is a vector bundle of rank $n$, then
\[
[E] = \mathbb{L}^{\,n}[B].
\]

\begin{theorem}
Assume $M^d_{\mathrm{Higgs}}(G)$ is smooth.
Then in the Grothendieck ring of varieties,
\[
\left[
M^d_{\mathrm{Higgs}}(G)
\right]
=
\sum_{i\in I}
\mathbb{L}^{\,n_i}
\,[F_i].
\]
\end{theorem}

\begin{proof}
Using additivity in the Grothendieck ring,
\[
\left[
M^d_{\mathrm{Higgs}}(G)
\right]
=
\sum_{i\in I}
[S_i].
\]
Since each $S_i \to F_i$ is a vector bundle of rank $n_i$,
we have
\[
[S_i] = \mathbb{L}^{\,n_i}[F_i].
\]
Substituting gives the formula.
\end{proof}

\section*{Acknowledgment}
The first author is supported by the INSPIRE faculty fellowship (Ref No.: IFA22-MA 186) funded by the DST, Govt. of India. The second named author is partially supported by SERB SRG Grant SRG/2023/001006.


\begin{thebibliography}{30}
	   \bibitem{AO21}
	   D. Alfaya and A. Oliveira,
	   \emph{Lie algebroid connections, twisted Higgs bundles and motives of moduli spaces}, Journal of Geometry and Physics, Volume 201 (2024), 105195.
		

	\bibitem{A57}
		M. F. Atiyah
		\emph{Complex analytic connections in fibre bundles.} Trans. Amer. Math. Soc. 85 (1957), 181-207.
		
		\bibitem{AB02}
		H. Azad and I. Biswas,
		\emph{On holomorphic principal bundles over a compact
			Riemann surface admitting a flat connection}, Math. Ann. 322, 333–346 (2002)

            \bibitem{BB73}
A. Białynicki-Birula,
\emph{Some theorems on actions of algebraic groups},
Ann. of Math. (2) 98 (1973), 480–497.
		
		\bibitem{BGH13}
		I. Biswas, T. Gómez and N. Hoffmann, 
		\emph{Torelli theorem for the Deligne-Hitchin moduli space, II}, Doc. Math. 18 (2013) 1177–1189
		
		\bibitem{BH12}
		 I. Biswas and N. Hoffmann, 
		 \emph{A Torelli theorem for moduli spaces of
principal bundles over a curve}, Ann. Inst. Fourier 62 (2012), 87–106.






\bibitem{HT03}
	 T. Hausel and M. Thaddeus, 
	 \emph{Mirror symmetry, Langlands duality, and the Hitchin system}.
Invent. Math., 153(1):197–229, (2003).
	  

	   

    

 \bibitem{H87}
 N. J. Hitchin, 
 \emph{Stable bundles and integrable systems}, Duke Math. Jour.
54 (1987), 91–114.

 \bibitem{R75}
 A. Ramanathan, 
 \emph{Stable principal bundles on a compact Riemann surface}, Math. Ann. 213 (1975), 129–152.



\bibitem{R96}
 A. Ramanathan,
 \emph{Moduli for principal bundles over algebraic curves I-II}, Proc. Indian Acad. Sci. Math. Sci. 106(1996), 301-28, 421-49.

 \bibitem{S92}
 C. T. Simpson, 
 \emph{Higgs bundles and local systems}, Inst. Hautes ´ Etudes
Sci. Publ. Math. 75 (1992), 5–95.

   \bibitem{S94a}
      C. T. Simpson, 
      \emph{Moduli of representations of the fundamental group of a smooth projective variety I}. Publi. Math. I.H.E.S., 79:47–129, 1994.
 
\bibitem{S94}
C. T. Simpson,
\emph{Moduli of representations of the fundamental group of a smooth projective variety. II}, Inst. Hautes Etudes Sci. Publ. Math. 80 (1994), 5-79

	  
	  \bibitem{S97}
	  C. T. Simpson, 
	  \emph{The Hodge filtration on nonabelian cohomology}. Proceedings of Symposia in Pure Mathematics, 62.2:217–284, 1997.

    
	   
	   

	   
	   	
           
		
		



\end{thebibliography}
\end{document}